\providecommand{\U}[1]{\protect\rule{.1in}{.1in}}
\providecommand{\U}[1]{\protect\rule{.1in}{.1in}}
\newtheorem{theorem}{Theorem}
\theoremstyle{plain}
\newtheorem{definition}[theorem]{Definition}
\numberwithin{equation}{section}
\begin{document}

\title[\textbf{$q$-pseudoprimality }]
{\textbf{$q$-PSEUDOPRIMALITY:  A NATURAL GENERALIZATION OF STRONG PSEUDOPRIMALITY}}
\author[J.H. Castillo]{John H. Castillo}
\address{John H. Castillo, Departamento de Matemáticas y Estadística, Universidad de Nariño}
\email{jhcastillo@gmail.com, jhcastillo@udenar.edu.co}
\author[G. García-Pulgarín]{Gilberto Garc\'\i a-Pulgar\'in}
\address{Gilberto Garc\'\i a-Pulgar\'in, Universidad de Antioquia}
\email{gilberto.garcia@udea.edu.co}
\author[J.M Velásquez Soto]{Juan Miguel Vel\'asquez-Soto}
\address{Juan Miguel Vel\'asquez Soto, Departamento de Matemáticas, Universidad del Valle}
\email{juan.m.velasquez@correounivalle.edu.co}
\keywords{Period, decimal representation, order of an integer, multiplicative group of
units modulo $N$, pseudoprime, strong pseudoprime, overpseudoprime.}
\subjclass[2010]{11A51, 11Y11, 11Y55, 11B83}
\date{}

\begin{abstract}
In this work we present a natural generalization of strong pseudoprime to base $b$, which we have called $q$-pseudoprime to base $b$. It allows us to present another way to define a Midy's number to base $b$ (overpseudoprime to base $b$). Besides, we count the bases $b$ such that $N$ is a $q$-probable prime base $b$ and those ones such that $N$ is a Midy's number to base $b$. Furthemore, we prove that there is not a concept analogous to Carmichael numbers to $q$-probable prime to base $b$ as with the concept of strong pseudoprimes to base $b$.

\end{abstract}

\begingroup
\def\uppercasenonmath#1{} 
\maketitle
\endgroup

\section{Introduction}

Recently, Grau et al. \cite{grau} gave a generalization of Pocklignton's Theorem (also known as Proth's Theorem) and   Miller-Rabin primality test, it takes as reference some works of  
Berrizbeitia, \cite{berrizbeitia,BO2008}, where it is presented an extension to the concept of strong pseudoprime, called $\omega$-primes. As  Grau et al. said it is right, but its application is not too good because it is needed  $m$-th primitive roots of unity, see \cite{grau,zhang3}.

\

 In \cite{grau}, it is defined when an integer $N$ is a $p$-strong probable 
prime base $a$, for $p$  a prime divisor of $N-1$ and $\gcd(a,N)=1$. 
In a reading of that paper, we discovered that if a number $N$ is a $p$-strong probable prime to base $2$ for each $p$ prime divisor of $N-1$, it is actually a Midy's number or a overpseudoprime number  to base $2$. For instance, they said that  $2047$, $3277$, $4033$, $8321$, $65281$, $80581$, $85489$ and $88357$ are the first 
$p$-strong pseudoprimes to base $2$ for any prime $p\mid N-1$. Indeed, these integers are Midy's numbers to base $2$ and the first terms of the sequence $A141232$ at the Online Encyclopedia of Integers Sequences, OEIS, where 
we called them overpseudoprimes to base $2$.

It is important to highlight that the definition of $p$-strong probable prime to base $a$, does not require each divisor of $N-1$ to be congruent with $1$ module $p$. And this fact gives some difficulties as those ones showed in our Theorem \ref{MA}.

We organize this paper as follows. In the second section, we present the definition of  Midy's property and recall some known results about it, in particular we recall the Midy's number concept, some of its properties and
some connections between it and other former concepts of
pseudoprimes. \ We study properties of the set of integers $b$ such that 
 $N$ is a Midy's number to base $b$ and as new result we count the number of them.  

 In the third section we recall our concept of $q$-pseudoprimality to base $b$
and we stablish a formula that gives the number of bases of $q$-pseudoprimality. 

Finally, in the fourth section, we make some comments about the recent 
paper of Grau et al. \cite{grau}, where in an independent way are presented some of
our ideas.

\section{Midy's Property and Midy's numbers}

Let $N$ and $b$ be positive integers relatively primes, $b>1$ the base of numeration, 
$|b|_N$ the order of $b$ in the  multiplicative group $\mathbb{U}_N$ of positive 
integers less than $N$ and relatively primes with $N$, and $x\in \mathbb{U}_N$.
It is well known that when we write the fraction $\frac{x}{N}$ in base $b$, it is periodic. 
By period we mean the smallest repeating sequence of digits in base $b$ in such expansion,
it is easy to see that $\left\vert b\right\vert _{N}$ is the length of the
period of the fraction $\frac{x}{N}$ (see Exercise 2.5.9 in \cite{Nathanson}%
). Let $d,\,k$ be positive integers with $\left\vert b\right\vert _{N}=dk$,
$d>1$ and $\frac{x}{N}=0.\overline{a_{1}a_{2}\cdots a_{\left\vert b\right\vert
_{N}}}$ where the bar indicate the period and $a_{i}$'s are digits in base
$b$. We separate the period ${a_{1}a_{2}\cdots a_{\left\vert b\right\vert
_{N}}}$ in $d$ blocks of length $k$ and let
\[
A_{j}=[a_{(j-1)k+1}a_{(j-1)k+2}\cdots a_{jk}]_{b}%
\]
be the number represented in base $b$ by the $j$-th block and $S_{d}%
(x)=\sum\limits_{j=1}^{d}A_{j}$. If for all $x\in\mathbb{U}_{N}$, the sum
$S_{d}(x)$ is a multiple of $b^{k}-1$ we say that $N$ has the Midy's property
for $b$ and $d$. It is named after E. Midy (1836), to read historical aspects
about this property see \cite{Lewittes} and its references.

We denote with $\mathcal{M}_{b}(N)$ the set of positive integers $d$ divisors of
$|b|_{N}$ such that $N$ has the Midy's property for $b$ and $d$ and we will
call it the Midy's set of $N$ to base $b$. As usual, let $\nu_{p}(N)$ be the
greatest exponent of $p$ in the prime factorization of $N$ and $\omega(N)$
denotes the number of prime divisors of $N$.

For example $13$ has the Midy's property to the base $10$ and $d=3$, because
$|13|_{10}=6$, $1/13=0.\overline{076923}$ and $07+69+23=99$. Also, $49$ has
the Midy's property to the base $10$ and $d=14$, since $|49|_{10}=42$,
$1/49=0.\overline{020408163265306122448979591836734693877551}$ and
$020+408+163+265+306+122+448+979+591+836+734+693+877+551=7*999$. But $49$ does
not have the Midy's property to $10$ and $7$. Actually, we can see that
$\mathcal{M}_{10}(13)=\{2,3,6\}$ and $\mathcal{M}_{10}(49)=\{2, 3, 6, 14, 21,
42\}$.

In \cite{garcia09} was given the following characterization of Midy's property.

\begin{theorem}
\label{ppl2} If $N$ is a positive integer and $\left\vert b\right\vert
_{N}=kd$, then $d\in\mathcal{M}_{b}(N)$ if and only if $\nu_{p}(N)\leq\nu
_{p}(d)$ for all prime divisor $p$ of $\gcd(b^{k}-1,\ N)$.
\end{theorem}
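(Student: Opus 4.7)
\emph{Plan.} My approach is to reformulate Midy's property as a single divisibility and then analyze it prime by prime via the Lifting the Exponent Lemma (LTE).

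First, I would establish that $d\in\mathcal{M}_b(N)$ is equivalent to $N\mid M$, where $M:=(b^{kd}-1)/(b^k-1)=1+b^k+b^{2k}+\cdots+b^{k(d-1)}$. From the expansion $x/N=(A_1 b^{k(d-1)}+A_2 b^{k(d-2)}+\cdots+A_d)/(b^{kd}-1)$ for $x\in\mathbb{U}_N$, clearing denominators yields $x(b^{kd}-1)/N=\sum_{j=1}^d A_j b^{k(d-j)}$, and reducing modulo $b^k-1$ (using $b^k\equiv 1$) collapses the right side to $S_d(x)$, so $S_d(x)\equiv x(b^{kd}-1)/N\pmod{b^k-1}$. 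Taking $x=1$ shows that the Midy property forces $(b^k-1)\mid(b^{kd}-1)/N$, i.e., $N\mid M$; the converse is immediate since this divisibility gives $(b^k-1)\mid S_d(x)$ for every $x$.

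Next I would turn $N\mid M$ into the prime-by-prime condition $\nu_p(N)\le\nu_p(M)$ for each $p\mid N$, splitting on whether $p\mid b^k-1$. If $p\nmid b^k-1$, the identity $M(b^k-1)=b^{kd}-1$ together with $N\mid b^{kd}-1$ (forced by $|b|_N=kd$) gives $\nu_p(M)=\nu_p(b^{kd}-1)\ge\nu_p(N)$ automatically, explaining why only primes $p\mid\gcd(b^k-1,N)$ appear in the final statement. For $p\mid\gcd(b^k-1,N)$ with $p$ odd, LTE produces $\nu_p(b^{kd}-1)=\nu_p(b^k-1)+\nu_p(d)$, whence $\nu_p(M)=\nu_p(d)$ and the required condition becomes precisely $\nu_p(N)\le\nu_p(d)$.

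The main obstacle is the prime $p=2$, because the $2$-adic LTE splits into subcases depending on $b^k\bmod 4$ and the parity of $d$. I would dispatch $b^k\equiv 1\pmod 4$ via the same identity as in the odd case, and for $b^k\equiv 3\pmod 4$ handle $d$ odd and $d$ even separately using the even-exponent form $\nu_2(x^n-y^n)=\nu_2(x-y)+\nu_2(x+y)+\nu_2(n)-1$, exploiting the structural constraints on $N$ imposed by $|b|_N=kd$ to recover $\nu_2(N)\le\nu_2(d)$ in each subcase. Assembling the three cases completes the proof.
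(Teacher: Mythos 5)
The paper itself gives no proof of this theorem --- it is quoted from the reference \cite{garcia09} --- so your argument can only be assessed on its own terms. Your reduction of the Midy property to the single divisibility $N\mid M$ with $M=(b^{kd}-1)/(b^k-1)$ is correct: the congruence $S_d(x)\equiv x(b^{kd}-1)/N \pmod{b^k-1}$ and the specialization $x=1$ do exactly what you say. Likewise the LTE analysis settles every odd prime $p$, and also $p=2$ when $b^k\equiv 1\pmod 4$: there $\nu_p(M)=\nu_p(d)$ for $p\mid\gcd(b^k-1,N)$, while $\nu_p(M)=\nu_p(b^{kd}-1)\geq\nu_p(N)$ automatically for the remaining primes dividing $N$. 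For odd $N$ (the only setting in which the paper ever invokes the theorem, since then $2\nmid\gcd(b^k-1,N)$) this is already a complete and clean proof.

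The gap is the final subcase, $p=2$, $b^k\equiv 3\pmod 4$, $d$ even, which you propose to dispatch by ``recovering'' $\nu_2(N)\leq\nu_2(d)$ from the constraint $|b|_N=kd$. That cannot be done, because the equivalence asserted by the theorem genuinely fails there: the even-exponent LTE gives $\nu_2(M)=\nu_2(b^k+1)+\nu_2(d)-1\geq\nu_2(d)+1$, so $N\mid M$ can hold while $\nu_2(N)>\nu_2(d)$. Concretely, take $N=4$, $b=3$, $k=1$, $d=2$: then $1/4=0.\overline{02}_3$ and $3/4=0.\overline{20}_3$, so $S_2(x)=2$ is divisible by $b^k-1=2$ for both $x\in\mathbb{U}_4$ and hence $2\in\mathcal{M}_3(4)$, yet $\nu_2(4)=2>1=\nu_2(2)$. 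No structural consequence of $|b|_N=kd$ can rescue the claimed inequality against this example. The remedy is either to add the hypothesis that $N$ is odd (which makes your first two paragraphs a complete proof and the third vacuous), or to replace the condition at $p=2$ in that subcase by $\nu_2(N)\leq\nu_2(b^k+1)+\nu_2(d)-1$; as written, the promised third case of your proof cannot be carried out.
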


Last theorem stablishes that if $d$ is a divisor of $\left\vert
b\right\vert _{N}$ then $d\in\mathcal{M}_{b}(N)$ if and only if for each prime
divisor $p$ of $N$ such that $\nu_{p}\left(  N\right)  >\nu_{p}\left(
d\right)  $, there exists a prime $q$ divisor of $\left\vert b\right\vert
_{N}$ that satisfies $\nu_{q}\left(  \left\vert b\right\vert _{p}\right)
>\nu_{q}\left(  \left\vert b\right\vert _{N}\right)  -\nu_{q}\left(  d\right)
$.

We demonstrated, see \cite[Cor. 1]{trio}, that if $d_{1}\in\mathcal{M}_{b}(N)$ and
$d_{2}$ is a divisor of $\left\vert b\right\vert _{N}$ and $d_{1}\mid d_{2}$
then $d_{2}\in\mathcal{M}_{b}(N)$, in this way the set $\mathcal{M}_{b}(N)$ is
closed \textquotedblleft for multiples\textquotedblright.

In \cite[Th. 2.4]{integracion}, we proved the next result.

\begin{theorem}
Let $N,q,v$ be integers with $q$ prime and $v>0$. Then $q^{v}\in
\mathcal{M}_{b}(N)$ if and only if $N=q^{n}p_{1}^{h_{1}}p_{2}^{h_{2}}\cdots
p_{l}^{h_{l}}$ where $n$ is a non-negative integer, $p_{i}$'s are different
primes and $h_{i}$'s are non-negatives integers not all zero, verifying $0\leq
n\leq v$, $\nu_{q}(\left\vert b\right\vert _{p_{i}})>0$ and
\[
\nu_{q}(\left\vert b\right\vert _{N})-v<\min\limits_{1\leq i\leq l}\left\{
\nu_{q}(\left\vert b\right\vert _{p_{i}})\right\}  \text{.}%
\]

\end{theorem}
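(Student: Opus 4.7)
The strategy is to specialize Theorem~\ref{ppl2} to $d=q^{v}$ with $k=\left\vert b\right\vert_{N}/q^{v}$, and to translate its divisibility conditions into the $q$-adic language of the present statement. The key observation used throughout is that, for any prime $p$ coprime to $b$, one has $p\mid b^{k}-1$ if and only if $\left\vert b\right\vert_{p}\mid k$. Writing $\left\vert b\right\vert_{N}=q^{\nu_{q}(\left\vert b\right\vert_{N})}M$ with $\gcd(M,q)=1$, so that $k=q^{\nu_{q}(\left\vert b\right\vert_{N})-v}M$, and using that $\left\vert b\right\vert_{p}\mid\left\vert b\right\vert_{N}$ whenever $p\mid N$, the condition $\left\vert b\right\vert_{p}\mid k$ reduces for $p\mid N$ to the single inequality $\nu_{q}(\left\vert b\right\vert_{p})\leq\nu_{q}(\left\vert b\right\vert_{N})-v$.

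With this translation in hand, I would split the prime divisors of $N$ into the prime $q$ itself and the remaining primes $p_{i}\neq q$. For each such $p_{i}$ with $h_{i}\geq 1$, the requirement $\nu_{p_{i}}(N)\leq\nu_{p_{i}}(q^{v})=0$ coming from Theorem~\ref{ppl2} cannot possibly be met; hence $p_{i}$ must fail to divide $b^{k}-1$, which by the above is exactly the strict inequality $\nu_{q}(\left\vert b\right\vert_{p_{i}})>\nu_{q}(\left\vert b\right\vert_{N})-v$. Taking the minimum over $i$ yields the displayed condition, and together with $\nu_{q}(\left\vert b\right\vert_{N})\geq v>0$ it also forces $\nu_{q}(\left\vert b\right\vert_{p_{i}})>0$. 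For the prime $q$, Fermat's little theorem gives $\left\vert b\right\vert_{q}\mid q-1$ and hence $\nu_{q}(\left\vert b\right\vert_{q})=0$; consequently $\left\vert b\right\vert_{q}\mid k$, so whenever $n\geq 1$ the prime $q$ does lie in $\gcd(b^{k}-1,N)$, and Theorem~\ref{ppl2} then demands $n\leq v$.

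The converse would be a direct check: from the listed conditions on the factorization of $N$, the strict inequality excludes every $p_{i}$ from $\gcd(b^{k}-1,N)$, while the bound $n\leq v$ handles the contribution of $q$, so the hypotheses of Theorem~\ref{ppl2} are satisfied and $q^{v}\in\mathcal{M}_{b}(N)$. The delicate point --- and the one I expect to demand most of the care in writing up --- is ensuring that the equivalence ``$p\mid b^{k}-1\Longleftrightarrow\nu_{q}(\left\vert b\right\vert_{p})\leq\nu_{q}(\left\vert b\right\vert_{N})-v$'' is handled cleanly in both directions, in particular in the boundary case $\nu_{q}(\left\vert b\right\vert_{N})=v$ where $\nu_{q}(\left\vert b\right\vert_{N})-v=0$ and the inequality must be read strictly. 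Once that translation is firmly in place, the rest is routine bookkeeping with Theorem~\ref{ppl2}.
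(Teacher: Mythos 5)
The paper itself does not prove this statement --- it is quoted from Theorem 2.4 of \cite{integracion} --- so your reconstruction can only be judged on its own terms. Your route, specializing Theorem~\ref{ppl2} to $d=q^{v}$, $k=\left\vert b\right\vert_{N}/q^{v}$, is certainly the intended one, and your central translation is correct: since $\left\vert b\right\vert_{p}\mid\left\vert b\right\vert_{N}$ for $p\mid N$, one has $p\mid b^{k}-1$ iff $\nu_{q}(\left\vert b\right\vert_{p})\leq\nu_{q}(\left\vert b\right\vert_{N})-v$, so for $p_{i}\neq q$ the condition $\nu_{p_{i}}(N)\leq\nu_{p_{i}}(q^{v})=0$ forces exactly the displayed strict inequality, while $\nu_{q}(\left\vert b\right\vert_{q^{n}})<n$ handles the prime $q$ and yields $n\leq v$. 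That part of the plan is sound.

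The genuine gap is at the very start of the converse direction: membership in $\mathcal{M}_{b}(N)$ is, by the paper's definition, only possible for \emph{divisors} of $\left\vert b\right\vert_{N}$, so before you may write $k=\left\vert b\right\vert_{N}/q^{v}$ or invoke Theorem~\ref{ppl2} you must know $\nu_{q}(\left\vert b\right\vert_{N})\geq v$. The listed hypotheses do not give you this: they only yield $\nu_{q}(\left\vert b\right\vert_{N})\geq\max_{i}\nu_{q}(\left\vert b\right\vert_{p_{i}})\geq 1$ and $\max_{i}\nu_{q}(\left\vert b\right\vert_{p_{i}})-\min_{i}\nu_{q}(\left\vert b\right\vert_{p_{i}})<v$, which is compatible with $\nu_{q}(\left\vert b\right\vert_{N})<v$ (e.g.\ all $\nu_{q}(\left\vert b\right\vert_{p_{i}})=1$ and $v=3$). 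So either the statement carries an implicit hypothesis $q^{v}\mid\left\vert b\right\vert_{N}$ that your writeup must surface and use, or the converse as you have planned it does not go through; you flag the boundary case $\nu_{q}(\left\vert b\right\vert_{N})=v$ but not the fatal case $\nu_{q}(\left\vert b\right\vert_{N})<v$. A second, smaller omission: in the forward direction you must also justify why the $h_{i}$ cannot all be zero, i.e.\ why $N=q^{n}$ is excluded; this follows from $\nu_{q}(\left\vert b\right\vert_{q^{n}})\leq n-1<v$, which again prevents $q^{v}$ from dividing $\left\vert b\right\vert_{N}$, but it needs to be said.
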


Observe that when $N$ is a prime number, Theorem \ref{ppl2} is satisfied for any 
base $b$ and each prime divisor of $|b|_N$ and therefore it is also verified for 
any divisor greater than $1$ of $|b|_N$. Composite numbers with this property for 
a fixed base $b$ were studied in \cite{integracion} and \cite{GGP_shevelev}, under 
the name of Midy numbers to base $b$ or overpseudoprimes to base $b$.

\begin{definition}
We say that a number $N$ is a Midy's number to base $b$ (or overpseudoprime to
base $b$) if $N$ is an odd composite number relatively prime to both $b$ and
$\left\vert b\right\vert _{N}$ and for all divisor $d>1$ \ of $\left\vert
b\right\vert _{N}$ we get that $d\in\mathcal{M}_{b}(N)$.
\end{definition}

It is easy to see, from \cite[Th. 2.10]{integracion} or \cite[Th.
12]{GGP_shevelev}, that an odd composite number $N$ with $N$ relatively prime
with $b$, is a Midy's number to base $b$ if and only if $\left\vert
b\right\vert _{N}=\left\vert b\right\vert _{p}$ for every prime $p$ divisor of
$N$. Thereby, we have that an odd composite $N$ is a Midy's number to base $b$
if and only if each divisor of $N$ is either a prime or a Midy's number to
base $b$. Observe that $|1|_N=1$ for any positive integer $N$, for that reason 
for now on we accept $b$ to be equal to $1$, although it does not make sense 
to consider the Midy's property to the base $1$.

The result below, Theorem 2.3 of \cite{Motose2}, allows us to give another
characterization of Midy's numbers.

\begin{theorem}
[Theorem 2.3 of \cite{Motose2}]\label{motose}Let $m$, $b\geq2$, $n\geq3$ and
$r$ be integers, where $r$ is the greatest prime divisor of $n$. Then $m$ is a
divisor of $\Phi_{n}\left(  b\right)  $ if and only if $b^{n}\equiv
1\ \operatorname{mod}\ m$ and every prime divisor $p$ of $m$ satisfies that
\[
n=\left\{
\begin{array}
[c]{ccc}%
\left\vert b\right\vert _{p} &  & \text{if }r\neq p,\\
&  & \\
r^{e}\left\vert b\right\vert _{r} &  & \text{if }r=p.
\end{array}
\right.
\]

\end{theorem}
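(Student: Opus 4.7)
The plan is to establish both implications using the fundamental factorization $b^{n}-1=\prod_{d\mid n}\Phi_{d}(b)$ together with the classical lemma describing when a prime $p$ (with $p\nmid b$) divides $\Phi_{n}(b)$: namely, either $|b|_{p}=n$ and $p\nmid n$, or $p\mid n$ and $|b|_{p}=n/p^{\nu_{p}(n)}$. I would first recall (or reprove) this auxiliary lemma, whose proof rests on analyzing multiple roots of $x^{n}-1$ modulo $p$ and using that a repeated root forces $p\mid n$.

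For the forward implication, suppose $m\mid\Phi_{n}(b)$. Then $m\mid b^{n}-1$ follows from $\Phi_{n}(b)\mid b^{n}-1$, giving $b^{n}\equiv 1\pmod{m}$. Fix any prime $p\mid m$. The auxiliary lemma leaves two possibilities. The first gives immediately $|b|_{p}=n$, matching the $r\neq p$ branch. In the second, $p\mid n$ and $|b|_{p}=n/p^{\nu_{p}(n)}$; I would then exploit Fermat's little theorem, $|b|_{p}\mid p-1$, to argue that every prime divisor $q\neq p$ of $n$ must appear in $n/p^{\nu_{p}(n)}=|b|_{p}$, hence $q\mid p-1$ and so $q<p$. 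This forces $p$ to exceed every other prime divisor of $n$, i.e.\ $p=r$; with $e=\nu_{r}(n)$ this matches the $r=p$ branch.

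For the converse, I would fix a prime divisor $p$ of $m$ and aim to show $\nu_{p}(m)\leq\nu_{p}(\Phi_{n}(b))$. When $p\neq r$, the hypothesis $|b|_{p}=n$ combined with the auxiliary lemma ensures $p\nmid\Phi_{d}(b)$ for every proper divisor $d$ of $n$; hence $\nu_{p}(\Phi_{n}(b))=\nu_{p}(b^{n}-1)$, and the required bound follows from $m\mid b^{n}-1$. When $p=r$, I would invoke the Lifting-the-Exponent lemma to expand $\nu_{p}(b^{n}-1)=\nu_{p}(b^{|b|_{r}}-1)+\nu_{p}(n/|b|_{r})$ and distribute this valuation among the relevant factors $\Phi_{|b|_{r} r^{j}}(b)$ for $0\leq j\leq e$, to read off $\nu_{p}(\Phi_{n}(b))$ and conclude from $b^{n}\equiv 1\pmod{m}$.

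The step I expect to be the main obstacle is the case $p=r$ in the converse: there the $p$-adic valuation of $b^{n}-1$ is spread across several cyclotomic factors, so one needs the exact form of LTE (with the usual separate treatment of $p=2$) together with the hypothesis $n=r^{e}|b|_{r}$ to guarantee that $\Phi_{n}(b)$ absorbs enough of the valuation to dominate $\nu_{p}(m)$, while the remaining cases are routine once the auxiliary lemma is in hand.
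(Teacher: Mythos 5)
The paper does not actually prove this theorem; it is quoted from Motose's paper \cite{Motose2} and used as a black box, so there is no in-paper argument to compare yours against. Your overall strategy --- the factorization $b^{n}-1=\prod_{d\mid n}\Phi_{d}(b)$ together with the classification of prime divisors of $\Phi_{n}(b)$ --- is the standard route, and it does settle the forward implication and the $p\neq r$ case of the converse exactly as you describe.

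The step you flagged as the main obstacle, the converse when $p=r$, is not merely delicate: it cannot be closed, because the statement as transcribed here is false. When $e\geq1$ one always has $\nu_{r}(\Phi_{n}(b))=1$; this is precisely what your LTE computation reveals, since the valuation $\nu_{r}(b^{n}-1)=\nu_{r}(b^{|b|_{r}}-1)+e$ is distributed so that $\Phi_{|b|_{r}}(b)$ absorbs $\nu_{r}(b^{|b|_{r}}-1)$ and each factor $\Phi_{r^{j}|b|_{r}}(b)$ with $1\leq j\leq e$ receives exactly one power of $r$. The hypotheses, however, only bound $\nu_{r}(m)$ by $\nu_{r}(b^{n}-1)$, which can exceed $1$. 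Concretely, take $b=2$, $n=6$, $r=3$, $m=9$: then $2^{6}\equiv1\pmod{9}$, the only prime divisor of $m$ is $3=r$, and $6=3^{1}\cdot|2|_{3}$, so the right-hand side of the equivalence holds, yet $\Phi_{6}(2)=3$ is not divisible by $9$. The statement needs the extra hypothesis $\nu_{r}(m)\leq1$ (equivalently $r^{2}\nmid m$) in the $p=r$ branch; with that added, your argument goes through as planned. Note that the present paper only applies the theorem with $\gcd(m,n)=1$, where the $p=r$ branch never arises, so its later results are unaffected.
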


We denote with $\Phi_{n}\left(  x\right)  $ the $n$-th cyclotomic polynomial
with rational coefficients. \ From the above theorem we get immediatly the
next characterization of Midy's numbers.

\begin{theorem}
A composite number $N$ with $\gcd\left(  N,\left\vert b\right\vert
_{N}\right)  =1,$ is a Midy's number to base $b$ if and only if $\Phi
_{\left\vert b\right\vert _{N}}\left(  b\right)  \equiv0\ \operatorname{mod}%
\ N$.
\end{theorem}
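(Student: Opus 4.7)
The plan is to combine the earlier characterization of Midy's numbers (that $N$ is a Midy number to base $b$ iff $|b|_N=|b|_p$ for every prime divisor $p$ of $N$) with Motose's criterion \ref{motose}, applied to $m=N$ and $n=|b|_N$. The assumption $\gcd(N,|b|_N)=1$ turns out to be exactly what is needed to make the ``$r=p$'' case of Motose's dichotomy vacuous, which is the technical heart of the argument.

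First I would fix $n:=|b|_N\geq 2$ (the degenerate case $n=1$ can be handled separately or excluded at the outset, since a composite $N$ with $|b|_N=1$ would force $b\equiv 1\pmod N$, and then no prime $p\mid N$ satisfies the Midy criterion nontrivially). Apply Theorem \ref{motose} with $m=N$ and this $n$: then $N\mid \Phi_n(b)$ iff $b^n\equiv 1\pmod N$ and every prime divisor $p$ of $N$ satisfies
\[
n=\begin{cases}|b|_p & \text{if } r\neq p,\\ r^{e}|b|_r & \text{if } r=p,\end{cases}
\]
where $r$ is the largest prime divisor of $n$. The first congruence $b^n\equiv 1\pmod N$ is automatic, since $n=|b|_N$.

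The key reduction is now to eliminate the branch $r=p$. Since $r$ is a prime divisor of $n=|b|_N$ and the hypothesis gives $\gcd(N,|b|_N)=1$, no prime divisor $p$ of $N$ can coincide with $r$. Hence for every prime $p\mid N$ only the first branch applies, and Motose's condition collapses to $|b|_p=n=|b|_N$ for every prime $p\mid N$.

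Finally I would invoke the characterization recalled after the definition of Midy number: a composite $N$ coprime to $b$ is a Midy's number to base $b$ if and only if $|b|_N=|b|_p$ for every prime $p\mid N$. Combining this with the previous paragraph yields $\Phi_{|b|_N}(b)\equiv 0\pmod N$ iff $N$ is a Midy number to base $b$, proving the theorem. The only delicate point is verifying that the $r=p$ case really cannot occur, and this is exactly ensured by the coprimality hypothesis $\gcd(N,|b|_N)=1$; everything else is a direct translation between Motose's statement and the order-based characterization already established.
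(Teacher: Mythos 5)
Your proposal is correct and follows exactly the route the paper intends: the paper gives no explicit proof, merely asserting that the statement ``follows immediately'' from Theorem \ref{motose}, and your argument (apply Motose with $m=N$, $n=\left\vert b\right\vert_{N}$, use $\gcd\left(N,\left\vert b\right\vert_{N}\right)=1$ to kill the $r=p$ branch, then invoke the order-based characterization $\left\vert b\right\vert_{N}=\left\vert b\right\vert_{p}$) is precisely that deduction spelled out. The only pedantic point is that Motose's theorem as stated requires $n\geq 3$, so the case $\left\vert b\right\vert_{N}=2$ (where $\Phi_{2}(b)=b+1$ and everything can be checked directly) should be dispatched alongside $n=1$, but this does not affect the substance of the argument.
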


Theorem \ref{motose} allow us to conclude, in particular, that if $n$ and $b$
are integers with $n\geq3$ and $b\geq2$ then, $\gcd\left(n,\Phi_{n}\left(
b\right)  \right)  $ is either $1$ or the greatest prime divisor of $n$,
therefore this result give us a way to produce Midy's numbers. Indeed, if
$p$ is a prime divisor of $\frac{\Phi_{N}\left(  b\right)  }{\gcd\left(
N,\ \Phi_{N}\left(  b\right)  \right)  }$,  then $\left\vert b\right\vert
_{p}=N$ and so we get the following result.

\begin{theorem}
Let $N>2$ and $f_{N}\left(  b\right)  =\frac{\Phi_{N}\left(  b\right)  }%
{\gcd\left(  N,\ \Phi_{N}\left(  b\right)  \right)  }$. If $f_{N}\left(
b\right)  $ is composite, then $f_{N}\left(  b\right)  $ is a Midy's number to
base $b$.
\end{theorem}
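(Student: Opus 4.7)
My plan is to apply the characterization given immediately after the definition of Midy's number: an odd composite $M$ coprime to both $b$ and $|b|_M$ is a Midy's number to base $b$ if and only if $|b|_M=|b|_p$ for every prime divisor $p$ of $M$. Setting $M=f_N(b)$, I would verify each condition, making essential use of the observation highlighted in the paragraph preceding the statement: every prime divisor $p$ of $f_N(b)$ satisfies $|b|_p=N$. This is an immediate consequence of Theorem \ref{motose} together with the fact that $\gcd(N,\Phi_N(b))\in\{1,r\}$, where $r$ is the greatest prime divisor of $N$; the only prime $p\mid\Phi_N(b)$ that could fail $|b|_p=N$ is $p=r$, and that factor is precisely what the division by $\gcd(N,\Phi_N(b))$ removes.

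Given $|b|_p=N$ for each prime $p\mid M$, the auxiliary hypotheses fall out quickly. No such $p$ can divide $b$, since otherwise $|b|_p$ would be undefined, so $\gcd(M,b)=1$; the case $p=2$ would force $N=1$, contradicting $N>2$, so $M$ is odd; compositeness is by hypothesis. The chain $M\mid\Phi_N(b)\mid b^N-1$ yields $|b|_M\mid N$, and $|b|_p\mid|b|_M$ combined with $|b|_p=N$ then forces $|b|_M=N$. Finally, any prime dividing both $M$ and $N$ would satisfy $N=|b|_p\mid p-1$, hence $p>N$, contradicting $p\mid N$; thus $\gcd(M,|b|_M)=\gcd(M,N)=1$.

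With all hypotheses of the characterization verified and $|b|_M=N=|b|_p$ for every prime $p\mid M$, the characterization identifies $M=f_N(b)$ as a Midy's number to base $b$. The only step that is more than bookkeeping is the equality $|b|_p=N$ for primes $p\mid f_N(b)$, which is precisely why the authors isolate it in the discussion preceding the statement; the remaining work is simply verifying that $f_N(b)$ inherits the correct parity and coprimality properties from this single input.
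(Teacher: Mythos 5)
Your proposal is correct and follows exactly the route the paper takes: the paper's entire argument is the remark preceding the statement that every prime divisor $p$ of $f_N(b)$ satisfies $\left\vert b\right\vert_{p}=N$ (via Theorem \ref{motose} and the fact that $\gcd(N,\Phi_N(b))$ is $1$ or the largest prime factor of $N$), combined with the characterization of Midy's numbers by $\left\vert b\right\vert_{N}=\left\vert b\right\vert_{p}$. You simply spell out the coprimality and parity bookkeeping that the paper leaves implicit.
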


With the help of this generating way of Midy's numbers, it is easy to prove
that if $b$ is an even number then the generalized Fermat number $b^{2^{n}}+1$
is either a prime or a Midy's number; the same happens with the generalized
Mersenne numbers $\frac{b^{p}-1}{b-1}$ where $p$ is prime which not divides
$b-1$. In this way Fermat and Mersenne were not totally wrong about the
primality of their numbers.

Now, we study some connections between Midy's numbers and some kind of
pseudoprimes. The composite integer $N$ is called a pseudoprime (or Fermat
pseudoprime) to base $b$ if $\gcd\left(  b,N\right)  =1$ and $b^{N-1}%
\equiv1\ \operatorname{mod}\ N$. An integer which is pseudoprime for all
possible bases $b$ is called a Carmichael number or an absolute pseudoprime.
An odd composite $N$ such that $N-1=2^{s}t$ with $t$ an odd integer and
$\gcd\left(  b,\ N\right)  =1$, is said to be a strong pseudoprime to base
$b$~ if either $b^{t}\equiv1\ \operatorname{mod}\ N$ or $b^{2^{i}t}%
\equiv-1\ \operatorname{mod}\ N$, for some $0\leq i<s$. \ It can be prove that
an odd composite integer $N$ is a strong pseudoprime to base $b$ if and only
if $N$ is pseudoprime to base $b$ and there is a non-negative integer $k$ such
that $\nu_{2}\left(  \left\vert b\right\vert _{N}\right)  =\nu_{2}\left(
\left\vert b\right\vert _{p}\right)  =k$ for all prime $p$ divisor of $N$.

 The set of bases of Midy pseudoprimality is closed under taking powers,
although it is not closed under multiplication as we can see when take $N=91$ 
which is Midy's number to bases $9$ and $16$ but it is not to $53$, their product
modulo $N$.

\begin{theorem}
\label{midypseudoprimeisstrong} If $N$ is a Midy's number to base $b$, then $N$
is a strong pseudoprime to base $b$.
\end{theorem}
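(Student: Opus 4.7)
The plan is to invoke the two characterizations already in hand. The paragraph just before the theorem recalls that an odd composite $N$ (coprime to $b$) is a strong pseudoprime to base $b$ if and only if (i) $N$ is a Fermat pseudoprime to base $b$, and (ii) there is a non-negative integer $k$ with $\nu_{2}(|b|_{N}) = \nu_{2}(|b|_{p}) = k$ for every prime $p$ dividing $N$. On the Midy side, the discussion following Theorem 2.5 gives the equivalence: $N$ is a Midy's number to base $b$ iff $|b|_{N} = |b|_{p}$ for every prime divisor $p$ of $N$ (together with $N$ odd composite and coprime to $b\,|b|_{N}$). Placing these side by side, my task is only to verify (i) and (ii) from the Midy hypothesis.

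Condition (ii) is essentially free. Since $|b|_{p} = |b|_{N}$ for every prime $p \mid N$, the choice $k := \nu_{2}(|b|_{N})$ works uniformly for all such $p$. Thus the content of the theorem lies entirely in establishing (i), i.e.\ $b^{N-1} \equiv 1 \pmod{N}$, which is equivalent to $|b|_{N} \mid N-1$.

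For this step, I would write $d := |b|_{N}$ and $N = p_{1}^{a_{1}}\cdots p_{r}^{a_{r}}$. For each $i$, Fermat's little theorem gives $|b|_{p_{i}} \mid p_{i}-1$; the Midy characterization says $|b|_{p_{i}} = d$; hence $p_{i} \equiv 1 \pmod{d}$. Raising each congruence to the $a_{i}$ power and multiplying over $i$ yields
\[
N \;=\; \prod_{i=1}^{r} p_{i}^{a_{i}} \;\equiv\; 1 \pmod{d},
\]
so $d \mid N-1$, and therefore $b^{N-1} \equiv 1 \pmod{N}$. Combining (i) with (ii) via the stated characterization finishes the proof.

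I do not expect a serious obstacle here: once the equivalent forms of the two hypotheses are juxtaposed, the argument collapses to a one-line computation with Fermat's little theorem. The only subtlety worth flagging is to avoid conflating $|b|_{p^{a}}$ with $|b|_{p}$; the Midy condition only asserts $|b|_{p} = |b|_{N}$ for the primes $p$, but that is exactly what is needed to lift $p \equiv 1 \pmod d$ to $p^{a} \equiv 1 \pmod d$ and then multiply across prime-power factors.
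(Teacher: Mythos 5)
Your proof is correct and follows essentially the same route as the paper: both arguments rest on juxtaposing the characterization of Midy's numbers ($|b|_N=|b|_p$ for every prime $p\mid N$) with the stated characterization of strong pseudoprimes, the $\nu_2$-condition then being immediate with $k=\nu_2(|b|_N)$. You are in fact slightly more complete than the paper's proof, which dispatches the Fermat-pseudoprimality requirement with ``the result follows,'' whereas you explicitly derive $p\equiv 1\pmod{|b|_N}$ for each prime $p\mid N$ and hence $|b|_N\mid N-1$; that is a worthwhile detail to have written out.
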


\begin{proof}
Since $N$ is a Midy's number to base $b$ implies that $\left\vert b\right\vert
_{N}=\left\vert b\right\vert _{n}$ for each divisor $n$ of $N$ and thus there
is a non-negative integer $k$ such that for all prime divisor $p$ of $N$ we
get that $\nu_{2}\left(  \left\vert b\right\vert _{p^{\nu_{p}\left(  N\right)
}}\right)  =k$ and the result follows.
\end{proof}

The reciprocal is not true. For example $N=91$ is a strong pseudoprime to base
$53$, but it is not a Midy's number to this base. \ Additionally, from the
last theorem we could say that the Midy's numbers are ``stronger'' than
strong pseudoprimes.

We finish this section counting the number of positive integers $b$ such that $N$ is a
Midy pseudoprime to base $b$.

\begin{theorem}
Let $N=\prod\limits_{1\leq i\leq\omega(N)}p_{i}^{e_{i}}$ be an integer where
$p_{i}$ are different primes and $D=\gcd\left(  p_{1}-1,\ p_{2}-1,\ \ldots
,\ p_{\omega(N)}-1\right)  $, then the number of elements $b\in\mathbb{U}_{N}$
such that $N$ is a Midy's number base $b$ is given by
\begin{equation}
B_{m}(N)=\sum\limits_{d\mid D}\phi(d)^{\omega(N)}. \label{uno}%
\end{equation}

\end{theorem}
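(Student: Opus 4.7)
The plan is to combine the characterization (recalled just after the definition of Midy's number) that $N$ is a Midy's number to base $b$ if and only if $|b|_N = |b|_p$ for every prime divisor $p$ of $N$, with a Chinese Remainder Theorem decomposition of $\mathbb{U}_N$.

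First I would observe that imposing $|b|_N = |b|_{p_i}$ for all $i$ forces every $|b|_{p_i}$ to coincide with a single value $d$; since each $|b|_{p_i}$ divides $p_i-1$, this common value $d$ must divide $D = \gcd(p_1-1,\ldots,p_{\omega(N)}-1)$. Thus it suffices, for each divisor $d$ of $D$, to count the $b \in \mathbb{U}_N$ such that $|b|_{p_i^{e_i}} = d$ for every $i$, and then sum over $d\mid D$. Via the isomorphism $\mathbb{U}_N \cong \prod_i \mathbb{U}_{p_i^{e_i}}$ (CRT), this reduces to independent counts in each factor.

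The key technical step, which I expect to be the main obstacle, is the following lemma: for an odd prime $p_i$ and a divisor $d$ of $p_i-1$, the cyclic group $\mathbb{U}_{p_i^{e_i}}$ of order $p_i^{e_i-1}(p_i-1)$ contains exactly $\phi(d)$ elements of order $d$, and every such element automatically has order $d$ modulo $p_i$. The count $\phi(d)$ is the standard count of elements of prescribed order in a cyclic group; the order-coincidence calls for a lifting-the-exponent style argument. Indeed, if $|b|_{p_i} = d' \mid d$ with $b^{d'} = 1 + p_i k$, then raising to the $p_i^{e_i-1}$-th power gives $b^{d' p_i^{e_i-1}} \equiv 1 \pmod{p_i^{e_i}}$, so $d \mid d' p_i^{e_i-1}$; because $\gcd(d,p_i)=1$, this forces $d\mid d'$, hence $d'=d$.

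With the lemma established, CRT gives $\phi(d)^{\omega(N)}$ bases $b \in \mathbb{U}_N$ satisfying $|b|_{p_i^{e_i}} = d$ for every $i$, and summing over $d \mid D$ yields formula \eqref{uno}. The boundary term $d = 1$ contributes the base $b=1$, included by the convention noted just after the Midy's number definition (where the condition on divisors $d>1$ of $|b|_N$ is vacuous).
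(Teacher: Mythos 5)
Your proof is correct and follows essentially the same route as the paper's: reduce via the characterization $|b|_N=|b|_p$ to a common order $d$ dividing $D$, count the $\phi(d)$ elements of that order in each local factor, glue by the Chinese Remainder Theorem, and sum over $d\mid D$. The one place you go beyond the paper is the lifting-the-exponent lemma for $\mathbb{U}_{p_i^{e_i}}$ with $e_i>1$; the paper's proof only picks $b_i\in\mathbb{U}_{p_i}$ and glues them modulo the primes $p_i$, which determines $b$ modulo $N$ only when $N$ is squarefree, so your extra step is a genuine (and welcome) tightening rather than a different approach.
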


\begin{proof}
Take $b$ such that $N$ is a Midy's number base $b$. From \cite[Theorem
12]{GGP_shevelev} we know that $\left\vert b\right\vert _{p_{i}}=\left\vert b\right\vert
_{p_{1}}$ for each $i=2,\ldots,\omega(N)$ and for that reason $\left\vert
b\right\vert _{p_{1}}$ is a divisor of $D$. \ On the other hand, if $d$ is a
divisor of $D$, for each $i$ there are $\phi\left(  d\right)  $ elements
$b_{i}\in\mathbb{U}_{p_{i}}$ of order $d.$ \ The Chinese Remainder Theorem
allow us to obtain an element $b$ such that $b\equiv b_{i}\ \operatorname{mod}%
\ p_{i}$ where $1 \leq i\leq\omega(N)$ and thus if we take all these elements
$b_{i}$, we get $\phi(d)^{\omega(N)}$ possible bases $b$ and the result follows.
\end{proof}

\section{$q$-pseudoprimality: \ A natural generalization of strong
pseudoprimality.}

If $N$ is a Midy's number to base $b$, we have showed that $\nu_{q}\left(
\left\vert b\right\vert _{N}\right)  =\nu_{q}\left(  \left\vert b\right\vert
_{p}\right)  $ for all primes $q$ and $p$, with $p\mid N$. \ This last fact and the
characterization of strong pseudoprimality suggest the following definition:

\begin{definition}
\label{qprim}Let $N$, $b$ be integers with $\gcd\left(  b,N\right)  =1$ and
b$^{N-1}\equiv1\ \pmod{N}$ and $q$ a prime number such that for every prime
$p$ divisor of $N$, $q$ divides $p-1$. We say that $N$ is a $q$-probable prime
base $b$ if there is a non negative integer $k$ such that for every prime $p$
divisor of $N$ we have $\nu_{q}\left(  \left\vert b\right\vert _{p}\right)
=k$. Morever, if $N$ is composite we say that $N$ is $q$-pseudoprime to base
$b$.
\end{definition}

Even though, in the above definition it is necessary to calculate $|b|_p$, for 
each prime $p$ divisor of $N$, and to verify that $q$ appears the same number 
of times in each one of these numbers, actually we can decide the $q$-probable 
primality of a given number with a procedure similar to the Miller's Test. 
We proved it in \cite{integracion} and we present it in the next theorem.

\begin{theorem}[Theorem 3.6 of \cite{integracion}]\label{q-seudo}Assume that $N$ is and odd
integer, $b$ a positive integer relatively prime with $N$ and $q$ a prime that
divides $p-1$ for all prime divisor $p$ of $N$. Write $N-1=q^{s}t$ with
$\gcd\left(  q,t\right)  =1$, then $N$ is a $q$-probable prime base $b$ if and
only if one of the following conditions holds:

\begin{enumerate}
\item $b^{t}\equiv1\pmod N$.

\item There exists with $0\leq i<s$ such that $N$ divides $\Phi_{q}\left(
b^{q^{i}t}\right)  $.
\end{enumerate}

Furthermore, if the second condition holds, then every prime divisor of $N$ is
congruent with $1$ modulo $q^{i+1}$.
\end{theorem}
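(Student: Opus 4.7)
My plan is to reduce the characterization to statements about the $q$-adic valuations of the orders $|b|_{p}$, one prime $p\mid N$ at a time, and to exploit the hypothesis $q\mid p-1$ in the crucial form $q\neq p$. The one delicate ingredient, which I expect to be the main obstacle, is that the definition of $q$-probable primality is phrased in terms of $|b|_{p}$, whereas $b^{N-1}\equiv 1\pmod{N}$ gives information about $|b|_{p^{e_{p}}}$ for each prime-power $p^{e_{p}}\parallel N$. I will bridge this gap with the standard identity $|b|_{p^{e_{p}}}=p^{a}|b|_{p}$ with $0\le a\le e_{p}-1$: combined with $q\neq p$, it yields $\nu_{q}(|b|_{p^{e_{p}}})=\nu_{q}(|b|_{p})$, so the constant $k$ in the definition coincides with the common value of $\nu_{q}(|b|_{p^{e_{p}}})$.

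For the ``if'' direction, first suppose $b^{t}\equiv 1\pmod{N}$. Then $|b|_{p}\mid t$ and so $\nu_{q}(|b|_{p})=0$ for every $p\mid N$, which is the definition with $k=0$, and $b^{N-1}\equiv 1\pmod{N}$ is immediate from $t\mid N-1$. Now suppose $N\mid\Phi_{q}(b^{q^{i}t})$ for some $0\le i<s$. Writing $\Phi_{q}(x)=(x^{q}-1)/(x-1)$ and noting $\Phi_{q}(1)=q$, I would argue that for each $p\mid N$ the reduction of $b^{q^{i}t}$ modulo $p$ is a root of $\Phi_{q}$ different from $1$ (the latter because $p\neq q$), hence a primitive $q$-th root of unity modulo $p$. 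Thus $|b^{q^{i}t}|_{p}=q$ and $\nu_{q}(|b|_{p})=i+1$, which is the definition with $k=i+1$; also $b^{N-1}\equiv 1\pmod{N}$ follows from $b^{q^{i+1}t}\equiv 1\pmod{N}$ since $q^{i+1}t\mid N-1$.

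For the ``only if'' direction, let $k$ be the common value of $\nu_{q}(|b|_{p})$. Using $b^{N-1}\equiv 1\pmod{N}$ together with the lifting observation I obtain $|b|_{p^{e_{p}}}\mid q^{k}t$ for every $p^{e_{p}}\parallel N$, whence in particular $k\le s$. If $k=0$, the Chinese Remainder Theorem gives $b^{t}\equiv 1\pmod{N}$, which is condition~(1). If $k\ge 1$, set $i=k-1<s$: the same divisibility gives $(b^{q^{i}t})^{q}\equiv 1\pmod{N}$, while $|b|_{p}\nmid q^{i}t$ shows that $b^{q^{i}t}-1$ is coprime to every prime of $N$, hence a unit modulo $N$. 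Dividing $(b^{q^{i}t})^{q}-1$ by this unit yields $\Phi_{q}(b^{q^{i}t})\equiv 0\pmod{N}$, which is condition~(2). The ``furthermore'' clause is a byproduct of the same analysis: whenever (2) holds with index $i$, the argument produces $q^{i+1}\mid |b|_{p}\mid p-1$ for every $p\mid N$, so $p\equiv 1\pmod{q^{i+1}}$.
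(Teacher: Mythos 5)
Your proof is correct. Note that the paper itself gives no argument for this statement: it is imported verbatim as Theorem 3.6 of the cited reference \cite{integracion}, so there is no in-text proof to compare against. Your route is the natural one and all the delicate points are handled properly: the reduction $\nu_{q}(|b|_{p^{e_{p}}})=\nu_{q}(|b|_{p})$ via $|b|_{p^{e_{p}}}=p^{a}|b|_{p}$ and $q\neq p$; the exclusion of $b^{q^{i}t}\equiv 1\pmod{p}$ in case (2) via $\Phi_{q}(1)=q$ together with $q<p$; and, in the converse, the passage from $N\mid (b^{q^{i}t})^{q}-1$ to $N\mid \Phi_{q}(b^{q^{i}t})$ using that $b^{q^{i}t}-1$ is a unit modulo $N$. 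The ``furthermore'' clause indeed falls out of the computation $\nu_{q}(|b|_{p})=i+1$ combined with $|b|_{p}\mid p-1$.
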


Grau et al. \cite{grau} defined the concept of $q$-strong
probable prime to base $b$. Their idea is similar to our Definition
\ref{qprim}, although they do not require $q$ to be a divisor of $p-1$ for
each prime $p$ divisor of $N$ and this could leave to inconvenient facts as we
show in the following result.

\begin{theorem}[Theorem 3.9 of \cite{integracion}]\label{MA} Let $N$ be a Carmichael number and $q$ a
prime with $N-1=q^{s}t$ where $q$ does not divide $t$ and such that, for each
prime divisor $p$ of $N$, $q$ does not divide $p-1$, then $b^{t}%
\equiv1\pmod N$, for each integer $b$ relatively prime with $N$.
\end{theorem}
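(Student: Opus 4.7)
The plan is to use Korselt's criterion to split the congruence modulo $N$ into congruences modulo each prime divisor of $N$, and then exploit the hypothesis $q\nmid p-1$ to show $p-1$ divides $t$ rather than the full $N-1$.

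First I would recall that, by Korselt's criterion, a Carmichael number $N$ is squarefree and satisfies $p-1\mid N-1$ for every prime divisor $p$ of $N$. Write $N=p_{1}p_{2}\cdots p_{\omega(N)}$. Then from $p_i-1\mid N-1=q^{s}t$ together with the hypothesis $q\nmid p_i-1$, one has $\gcd(p_i-1,q^{s})=1$, so that $p_i-1\mid t$ for every $i$.

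Next, for any $b$ with $\gcd(b,N)=1$, Fermat's little theorem gives $b^{p_i-1}\equiv 1\pmod{p_i}$, and since $p_i-1\mid t$, we conclude $b^{t}\equiv 1\pmod{p_i}$ for every $i$. Finally, squarefreeness of $N$ lets us assemble these congruences by the Chinese Remainder Theorem to obtain $b^{t}\equiv 1\pmod{N}$.

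There is essentially no obstacle: the only nontrivial ingredient is Korselt's criterion (which supplies both squarefreeness and $p-1\mid N-1$), and once invoked, the key observation that $\gcd(p-1,q^{s})=1$ forces the factor $q^{s}$ of $N-1$ to be discarded. The significance of the statement, rather than the difficulty of its proof, is that it highlights why Definition \ref{qprim} imposes the condition $q\mid p-1$ for every prime divisor $p$ of $N$: without it, every Carmichael number trivially behaves like a $q$-probable prime to every base coprime to $N$ (satisfying the first alternative of Theorem \ref{q-seudo}), which would make the notion vacuous for such $N$.
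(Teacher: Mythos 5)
Your proof is correct and complete: Korselt's criterion gives squarefreeness and $p-1\mid N-1$ for each prime $p\mid N$, the hypothesis $q\nmid p-1$ then forces $p-1\mid t$, and Fermat's little theorem plus the Chinese Remainder Theorem finish the argument. The paper does not reproduce a proof here (it cites Theorem 3.9 of the reference \emph{integracion}), but this Korselt-based route is the standard one and there is nothing to object to; your closing remark about why Definition \ref{qprim} requires $q\mid p-1$ matches the paper's own stated motivation.
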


In consequence, if we removed the condition that each prime factor of $N$ to 
be of the form $hq+1$, one would define an analogue concept to Carmichael number
from the concept of $q$-probable prime.

For instance, this is the case of the Carmichael number:
\[
N=2\,333\,379\,336\,546\,216\,408\,131\,111\,533\,710\,540\,349\,903\,201
\]
which is product of $23$ primes, as follows
\begin{align*}
N=  &  11\times13\times17\times19\times29\times31\times37\times41\times
43\times47\times61\times71\times\\
&  \times73\times101\times109\times113\times127\times139\times163\times
211\times337\times421\times541\text{.}%
\end{align*}
Taking $q=12\,068\,159$ which is prime and $N-1=qt$ where%
\[
t=193\,350\,065\,784\,368\,304\,074\,474\,949\,634\,864\,800
\]
and if $b$ is relatively prime with $N$ then $b^{t}\equiv1\pmod N$. For that reason, in the
concept of Grau et al. \cite{grau}, it is a $q$-strong probable
prime, while it does not make sense to talk  about the $q$-probable primality of $N$.

We denote the number of bases of probable primality of $N$ with $B_{pp}\left(
N\right)  $ and its number of bases of strong probable primality with
$B_{spp}\left(  N\right)  $. \ \ It is well known that, see \cite[Exercises
3.14 and 3.15]{prime}:%

\begin{equation}
B_{pp}\left(  N\right)  =\prod\limits_{p\mid N}\gcd\left(  p-1,\ N-1\right)
\label{tres}%
\end{equation}

\begin{equation}
B_{spp}\left(  N\right)  =\left(  1+\frac{2^{\nu(2,N)\omega(N)}-1}%
{2^{\omega(N)}-1}\right)  \prod\limits_{p\mid N}\gcd\left(  p-1,\ t\right),
\label{dos}%
\end{equation}

where $N-1=2^{s}t$ with $t$ an odd number, $\omega(N)$ the number of prime divisors of
$N$ and for a prime $q$ we write $$\nu(q,N)=\nu_{q}\left(  \gcd\left(  p_{1}-1,
p_{2}-1, \ldots, p_{\omega(N)}-1\right)  \right).$$

Now we will count the number of bases $b$ such that $N$ is a $q$-probable
prime base $b$ and we denote it with $B_{qpp}\left(  N\right)  $.

\begin{theorem}
\label{basesqpp}Suppose that $N$ is an odd integer and $q$ a prime number such
that each prime divisor of $N$ is congruent with $1$ modulo $q$. Assume that
$N-1=q^{s}t$ where $q$ and $t$ are relatively primes, then
\[
B_{qpp}\left(  N\right)  =\left(  1+\left(  q-1\right)  ^{\omega(N)}%
\frac{q^{\nu\left(  q,N\right)  \omega(N)}-1}{q^{\omega(N)}-1}\right)
\prod_{p\mid N}\gcd\left(  \ p-1,\ t\right)  .
\]

\end{theorem}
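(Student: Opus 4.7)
The plan is to apply Theorem~\ref{q-seudo}, which characterizes $q$-probable primality as a dichotomy: either (i) $b^{t}\equiv 1 \pmod{N}$, or (ii) there exists $0\le i<s$ with $N\mid\Phi_{q}(b^{q^{i}t})$. I would first verify that these cases, and the $i$-indexed sub-cases of (ii), are mutually disjoint: if $b^{q^{i}t}\equiv 1\pmod{N}$ then $\Phi_{q}(b^{q^{i}t})\equiv q\pmod{N}$, which is nonzero mod $N$ since $q\nmid N$; and if $b^{q^{i}t}$ has order $q$ modulo $N$, then $b^{q^{j}t}$ for $j>i$ has order $1$, so the order-$q$ condition fails. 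This way, $B_{qpp}(N)$ is simply the sum of the counts from (i) and from each value of $i$.

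Next I would count case (i). Writing $N=\prod_{p\mid N} p^{e_{p}}$ and using the Chinese Remainder Theorem, the number of $b\in\mathbb{U}_{N}$ with $b^{t}\equiv 1\pmod{N}$ is $\prod_{p\mid N}\gcd(t,p^{e_{p}-1}(p-1))$. Since $\gcd(N,N-1)=1$ forces $\gcd(t,p)=1$, each local factor equals $\gcd(t,p-1)$, giving the prefactor $\prod_{p\mid N}\gcd(t,p-1)$ of the claimed formula.

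For case (ii) I would first prove the local reformulation: for $p\mid N$ (so $p\ne q$), $\Phi_{q}(y)\equiv 0\pmod{p^{e_{p}}}$ if and only if $y$ has order exactly $q$ in the cyclic group $\mathbb{U}_{p^{e_{p}}}$. The key point is that $y\not\equiv 1\pmod{p}$ makes $y-1$ a unit modulo $p^{e_{p}}$, so $\Phi_{q}(y)\equiv 0\pmod{p^{e_{p}}}$ is equivalent to $y^{q}\equiv 1\pmod{p^{e_{p}}}$. Consequently, $N\mid\Phi_{q}(b^{q^{i}t})$ iff for every $p\mid N$ the element $b^{q^{i}t}$ has order $q$ in $\mathbb{U}_{p^{e_{p}}}$. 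Now fix $p$ with $a_{p}:=\nu_{q}(p-1)$ and write the cyclic group $\mathbb{U}_{p^{e_{p}}}$ of order $q^{a_{p}}m_{p}$ with $\gcd(m_{p},q)=1$; solving the linear congruence $q^{i}tb'\equiv y'\pmod{q^{a_{p}}m_{p}}$ for each of the $q-1$ elements $y'$ of order $q$, and using $\gcd(q^{i}t,q^{a_{p}}m_{p})=q^{\min(i,a_{p})}\gcd(t,p-1)$, one finds no solutions when $i\ge a_{p}$ and exactly $(q-1)\,q^{i}\gcd(t,p-1)$ solutions when $i<a_{p}$.

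Finally I would assemble the pieces. For each $0\le i<\nu(q,N)=\min_{p\mid N}a_{p}$, the CRT gives
\[
(q-1)^{\omega(N)}\,q^{i\omega(N)}\prod_{p\mid N}\gcd(t,p-1)
\]
bases, and for $i\ge\nu(q,N)$ the count is zero because some local factor vanishes. Summing the geometric series $\sum_{i=0}^{\nu(q,N)-1}q^{i\omega(N)}=\dfrac{q^{\nu(q,N)\omega(N)}-1}{q^{\omega(N)}-1}$ and adding case (i) yields the announced formula. The main obstacle is the fourth step: properly lifting the order-$q$ criterion from $\mathbb{U}_{p}$ to $\mathbb{U}_{p^{e_{p}}}$ in the non-squarefree case and tracking how the valuation $\nu_{q}$ of the exponent $q^{i}t$ interacts with the $q$-part of $p^{e_{p}-1}(p-1)$ to produce the sharp cutoff $i<\nu(q,N)$.
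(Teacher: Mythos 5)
Your proposal is correct and follows essentially the same route as the paper: both reduce to Theorem \ref{q-seudo}, split into the case $b^{t}\equiv 1 \pmod{N}$ and the cases indexed by $i$, count bases locally modulo each $p^{\nu_p(N)}$ via the Chinese Remainder Theorem to get $(q-1)q^{i}\gcd(p-1,t)$ per prime, and sum the resulting geometric series. The only differences are cosmetic: you check disjointness of the cases and justify the cutoff $i<\nu(q,N)$ explicitly, and you count preimages of order-$q$ elements where the paper computes the same local quantity as $\gcd\left(q^{i+1}t,\ \phi(p^{\nu_{p}(N)})\right)-\gcd\left(q^{i}t,\ p-1\right)$.
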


\begin{proof}
Suppose that $N$ is a $q$-probable prime base $b$. By Theorem \ref{q-seudo}
we will consider two cases. Firstly, we assume that there exists an integer
$i$ such that $0\leq i<\nu\left(  q,N\right)  $, $\operatorname*{gcd}%
\left(  b^{q^{i}t}-1,\ N\right)  =1$ and $b^{q^{i+1}t}\equiv
1\operatorname{mod}N$. Let $p$ be a prime divisor of $N$. Thus, we get that
$\operatorname*{gcd}\left(  b^{q^{i}t}-1,\ p^{\nu_{p}\left(  N\right)
}\right)  =1$ and $b^{q^{i+1}t}\equiv1\operatorname{mod}p^{\nu_{p}\left(
N\right)  }$. Taking $m=q^{i}t$ it follows that $b^{qm}\equiv
1\operatorname{mod}p^{\nu_{p}\left(  N\right)  }$ and $b^{m}\not \equiv
1\operatorname{mod}p$, therefore the number of bases $b$, denoted by $h_{p}$,
is
\begin{align*}
h_{p}  &  =\operatorname*{gcd}\left(  qm,\ \phi(p^{\nu_{p}\left(  N\right)
})\right)  -\operatorname*{gcd}\left(  m,\ p-1\right) \\
&  =q^{i+1}\operatorname*{gcd}\left(  t,\ p-1\right)  -q^{i}%
\operatorname*{gcd}\left(  t,\ p-1\right) \\
&  =\left(  q-1\right)  q^{i}\operatorname*{gcd}\left(  \ p-1,\ t\right)
\text{.}%
\end{align*}

By the Chinese Remainder Theorem, we get that the number of solutions of
$\operatorname*{gcd}\left(  b^{q^{i}t}-1,\ N\right)  =1$ and $b^{q^{i+1}%
t}\equiv1\operatorname{mod}N$, is the product of $h_{p}$ when $p$ is a prime
divisor of $N$. Thus the number of these solutions is
\[
\left(  q-1\right)  ^{\omega(N)}q^{i\omega(N)}\prod_{p\mid N}%
\operatorname*{gcd}\left(  \ p-1,t\right)  \text{.}%
\]

Because $i$ takes values from $0$ until $\nu\left(  q,N\right)  -1$, then the
number of bases $b$ which satisfied the condition (2) of the Theorem
\ref{q-seudo} is
\[
\left(  q-1\right)  ^{\omega(N)}\prod_{p\mid N}\operatorname*{gcd}\left(
p-1,\ t\right)  \sum_{i=o}^{\nu\left(  q,N\right)  -1}q^{i\omega(N)};
\]
which implies
\begin{equation}
\left(  q-1\right)  ^{\omega(N)}\frac{q^{\nu\left(  q,N\right)  \omega(N)}%
-1}{q^{\omega(N)}-1}\prod_{p\mid N}\operatorname*{gcd}\left(  p-1,\ t\right)
\text{.} \label{popo}%
\end{equation}

Similarly, we count the number of bases $b$ verifying the first condition of
the Theorem \ref{q-seudo}, i.e. $b^{t}\equiv1\operatorname{mod}N$. \ This
number is equals to
\begin{equation}
\prod_{p\mid N}\operatorname*{gcd}\left(  p-1,\ t\right)  . \label{pipi}%
\end{equation}
\ The statement of the theorem follows when we add \eqref{popo} and \eqref{pipi}.
\end{proof}

\section{Acknowledgments}
The authors are members of the research group: Álgebra, Teoría de Números
y Aplicaciones, ERM. The results of this article are part of the research project ``Construcciones de conjuntos $B_h[g]$, propiedad  de Midy, y algunas aplicaciones'' CÓDIGO: 110356935047 partially financied by COLCIENCIAS.

\bibliographystyle{amsplain}
\bibliography{bibliografiaggp}

\end{document}